\let\csname equation*\endcsname\relax
\let\csname endequation*\endcsname\relax
\def\softd{{\leavevmode\setbox1=\hbox{d}%
\hbox to 1.05\wd1{d\kern-0.4ex{\char039}\hss}}}
\def\softt{{\leavevmode\setbox1=\hbox{t}%
\hbox to \wd1{t\kern-0.6ex{\char039}\hss}}}
\newcommand{\D}{\mathrm{d}}
\newcommand{\R}{\mathbb{R}}
\newcommand{\OO}{\mathcal{O}}
\newtheorem{theorem}{Theorem}[section]
\newtheorem{lemma}{Lemma}[section]
\newtheorem{proposition}{Proposition}[section]
\newtheorem{remark}{Remark}[section]
\newcommand{\defeq}{\vcentcolon=} 
\DeclarePairedDelimiter{\norm}{\lVert}{\rVert} 
\begin{document}

\title[Tunneling in soft waveguides:closing a book]
{Tunneling in soft waveguides: closing a book}

\author{Pavel Exner}
\address{Nuclear Physics Institute, Czech Academy of Sciences,
Hlavn\'{i} 130, \\ 25068 \v{R}e\v{z} near Prague, Czech Republic}
\address{Doppler Institute, Czech Technical University, B\v{r}ehov\'{a} 7, 11519 Prague, \\ Czech Republic}
\ead{exner@ujf.cas.cz}

\author{David Spitzkopf}
\address{Nuclear Physics Institute, Czech Academy of Sciences,
Hlavn\'{i} 130, \\ 25068 \v{R}e\v{z} near Prague, Czech Republic}
\address{Faculty of Mathematics and Physics, Charles University, V Hole\v{s}ovi\v{c}k\'ach 2, 18000 Prague \\ Czech Republic}
\ead{spitzkopf98@gmail.com}

\begin{abstract}
We investigate the spectrum of a soft quantum waveguide in two dimensions of the generalized `bookcover' shape, that is, Schr\"odinger operator with the potential in the form of a ditch consisting of a finite curved part and straight asymptotes which are parallel or almost parallel pointing in the same direction. We show how the eigenvalues accumulate when the angle between the asymptotes tends to zero. In case of parallel asymptotes the existence of a discrete spectrum depends on the ditch profile. We prove that it is absent in the weak-coupling case, on the other hand, it exists provided the transverse potential is strong enough. We also present a numerical example in which the critical strength can be assessed.
\end{abstract}

\pacs{03.65.Ge, 03.65Db}

%
\vspace{2pc} \noindent{\it Keywords}: Quantum waveguides, Schr\"odinger operators, ditch-shaped potentials, discrete spectrum, tunneling effect

%
\submitto{\JPA}

%
%
%

\section{Introduction} 
\setcounter{equation}{0}

Quantum waveguides have been a subject of intense investigation in the previous three or four decades; for a survey (except the last few years) and an extensive bibliography we refer to \cite{EK15}. They serve as models of numerous physical systems, and at the same time, they represent a source of interesting mathematical problems. The nature of the interaction responsible for the guiding may differ. Often the walls of the waveguide are hard, meaning mathematically that the Hamiltonian is Dirichlet Laplacian in an appropriate spatial region. Alternatively singular Schr\"odinger operators are used with an attractive interaction supported by curves, surfaces, or more complicated sets of lower dimensionality in the configuration space.

Recently an alternative, more realistic model of `soft' waveguides attracted attention, cf.~\cite{Ex20} and further developments in \cite{KKK21, EL21, EKL22, EV23}, where the interaction term is a regular potential `ditch' the axis of which is a fixed curve. In such a description the tunneling between different parts of the guide is not suppressed while its transverse width need not be zero. The mentioned work dealt with the two-dimensional situation; there are extensions to three dimensions describing soft quantum tubes \cite{Ex22} and layers \cite{EKP20, KK23}, or even to situations where the ditch is replaced by an array of potential wells \cite{Ex23}.

The most remarkable feature of these systems are relations between their geometry and the energy spectrum, in particular, the fact that bending gives rise to an effective attractive interaction able to produce localized states in an infinitely extended ditch. This property is shared with the other waveguide types mentioned above, however, in contrast to them, the existence results obtained so far lack the universal character of their `hard' or `singular' counterparts. For two-dimensional Schr\"odinger operator with a $\delta$ interaction on a non-straight curve, for instance, Birman-Schwinger method yields the discrete spectrum existence under rather weak regularity and asymptotic straightness requirements \cite{EI01}, while the sufficient condition obtained by the same method in the soft case \cite{Ex20} is much weaker.

As an alternative, one may apply the variational method to the operator directly; the difficulty with this approach is that in general it is not easy to construct a suitable trial function. In \cite{KKK21} a simple case, usually dubbed \emph{bookcover} in the literature \cite{SM90}, was treated; the existence of bound states was proved for any bending angle $\theta\in(0,\pi)$ between the straight parts of the guide and any channel profile such that the transverse part of the operator has a discrete eigenvalue. Two questions have been left open, both related to the tunneling between the guide arms, namely the accumulation rate of the eigenvalues as the `book' closes, $\theta\to\pi-$, and the existence of the discrete spectrum in the limiting case, $\theta=\pi$.

A considerably more general existence result was obtained by variational method in \cite{EV23} allowing the finite non-straight part to be built over a $C^3$ smooth curve, while the semi-infinite asymptotic parts were supposed to be straight as before. The above two questions remained open again, and goal of this paper is to address them in the less restrictive geometrical setting of \cite{EV23} where the `spine' of the book may have an arbitrary shape provided it is finite and sufficiently regular. As in \cite{EL21} we characterize the ditch profile by a measure type potential which allows us to treat on the same footing regular and singular interactions; recall that for the latter the said two questions are also open.

In the next section we collect the necessary notions and hypotheses, we also find the essential spectrum which differs in the parallel and non-parallel case due to the different character of the tunneling in these two situations. Section~\ref{s:closing} is devoted to `closing' the book. Using the angle $\beta=\pi-\theta$ as a more natural parameter in this case, we show the curvature-induced eigenvalues fill the difference between the two essential spectra in the limit $\beta\to 0$, their number in any subinterval behaving as $\sim\beta^{-1}$.

After that we turn to the situation when the asymptotes are parallel, $\beta=0$. In Section~\ref{s:weak} we consider the weak-coupling case in which the channel profile is shallow; we show that the discrete spectrum is then empty. The opposite extreme is the topic of Section~\ref{s:strong}. The strong-coupling asymptotics offer more than one regime and we discuss three of them, a singular $\delta$ potential, a rectangular well of a non-flat bottom, and a sufficiently deep and narrow profile; in all of them we prove that strong coupling gives rise to discrete eigenvalues. As a byproduct, we generalize in Sec.~\ref{ss:dirichlet} the well-known result about the discrete spectrum existence \cite[Thm.~1.1]{EK15} to a class of Schr\"odinger operators in curved Dirichlet strips. As finding the critical interaction strength for \mbox{U-shaped} soft waveguides and their generalizations is out of reach analytically, we present in Section~\ref{s:num} a numerical example of an \mbox{U-shaped} channel with a specific transversal profile to illustrate bound states. Furthermore, we present computations regarding the dependence of the critical interaction strength on the geometry of the waveguide.

\section{Problem setting} 
\setcounter{equation}{0}

The object of our interest are two-dimensional Schr\"odinger operators with a potential in the shape of a `ditch' of a finite width and fixed profile. Let us describe the setting in more precise terms starting from the channel axis $\Gamma$. We suppose it is an infinite and smooth planar curve without self-intersections, naturally parametrized by its arc length~$s$, that is, the graph of a $C^1$ function $\Gamma:\:\R\to\R^2$ such that $\dot\Gamma(s)=1$, where the dot conventionally denotes the derivative with respect to $s$. With an abuse of notation we employ the same symbol for the map $\Gamma$ and for its range. If $\Gamma$ is $C^2$, its signed curvature is $\gamma:\: \gamma(s)= (\dot\Gamma_2\ddot\Gamma_1 - \dot\Gamma_1 \ddot\Gamma_2)(s)$ being naturally independent of the Cartesian coordinates used. The curve $\Gamma$ is supposed to satisfy the following assumption:
 \begin{enumerate}[(a)]
 \setlength{\itemsep}{0pt}
 \item $\Gamma$ is piecewise $C^2$-smooth and its curvature is of compact support. \label{assa}
 \end{enumerate}
Without loss of generality we may suppose that the straight parts of $\Gamma$ are the halflines $\Gamma_\pm:= \big\{\big(x,\pm(\rho+x\tan(\frac12\beta)\big):\, x\ge 0\big\}$ for a positive $\rho$ and $\beta\in[0,\frac12\pi)$, symmetric with respect to the $x$ axis, parametrized as
 \begin{equation} \label{paramext}
\Gamma_\pm:= \Big\{\big((\pm s-s_0)\cos(\textstyle{\frac12}\beta), \rho+(s\mp s_0)\sin(\textstyle{\frac12}\beta)\big):\, \pm s\ge s_0\Big\}
 \end{equation}
for some $s_0>2\rho$. The middle part of the curve referring to $s\in(-s_0,s_0)$ will be denoted as $\Gamma_\mathrm{int}$; without loss of generality again, we may suppose that it is contained in the open left halfplane, $x<0$. Recall that $\theta(s_2,s_1):= \int_{s_1}^{s_2} \gamma(s)\,\D s$ is the angle between the tangents at the points $\Gamma(s_j),\, j=1,2$, hence
 \begin{equation} \label{angle}
\beta = \pi - \int_{-s_0}^{s_0} \gamma(s)\,\D s\,;
 \end{equation}
we will be interested in the situations when $\beta=0$ and in the asymptotic regime $\beta\to 0+$.

Let us turn to the potential. We suppose that it is restricted transversally being a subset of the strip
 \begin{equation} \label{strip}
\Omega^a := \{ \textbf{x}\in\R^2:\:\mathrm{dist}(\textbf{x},\Gamma) < a \},
 \end{equation}
of a halfwidth $a>0$ built over $\Gamma$. We suppose that $a$ is small enough so that we can parametrize $\Omega^a$ using the parallel (often called Fermi) coordinates, given by the arc length $s$ and the distance $u$ along the normal $N(s) = (-\dot\Gamma_2(s),\dot\Gamma_1(s))$ to $\Gamma$ at the point~$s$,
 \begin{equation} \label{parstrip}
\textbf{x}(s,u) = \big(\Gamma_1(s)-u\dot\Gamma_2(s), \Gamma_2(s)+u\dot\Gamma_1(s) \big),
 \end{equation}
in such a way that
 \begin{enumerate}[(a)]
 \setcounter{enumi}{1}
 \setlength{\itemsep}{0pt}
\item the map $\R\times I_a\to \Omega^a$ defined by \eqref{parstrip} is a diffeomorphism, \label{assb}
 \end{enumerate}
where $I_a:=(-a,a)$; note that it can be satisfied, in particular, only if $a<\rho$.

The channel profile will be determined by the one-dimensional Schr\"odinger operator
 \begin{equation} \label{transop}
h_v = -\textstyle{\frac{\D^2}{\D x^2}}-v(x).
 \end{equation}
The potential $v$ is always supposed to be \emph{nontrivial}; when the interaction strength will be of importance we will replace $h_v$ by $h_{\lambda v}$ with $\lambda>0$. Mostly we will not strive
for the maximum generality focusing primarily on two situations; in the first
 \begin{enumerate}[(a)]
 \setcounter{enumi}{2}
 \setlength{\itemsep}{0pt}
 \item $v\in L^\infty(\R)$ with $\mathrm{supp}\,v\subset (-a,a)$ and $\inf\sigma(h_v)<0$\,; \label{assc}
 \end{enumerate}
needless to say one then has $D(h_v)=H^2(\R)$. Moreover, we have $\sigma_\mathrm{ess}(h_v)=[0,\infty)$ and $\epsilon_v:=\inf\sigma(h_v)<0$ is a simple eigenvalue. Using then the map \eqref{parstrip} we define
 \begin{subequations}
 \begin{align}
 V:\: \Omega^a & \to\R_+,\quad V(\textbf{x}(s,u))=v(u), \label{potential} \\[.5em]
 H_{\Gamma,V} & = -\Delta-V(\textbf{x}); \label{Hamiltonian}
 \end{align}
 \end{subequations}
in view of assumption \eqref{assc} the operator domain is $D(-\Delta)=H^2(\R^2)$. The second case of interest is the \emph{leaky curve} when instead of \eqref{Hamiltonian} we consider the operator
 \begin{enumerate}[(a)]
 \setcounter{enumi}{3}
 \setlength{\itemsep}{0pt}
 \item $H_{\Gamma,\alpha}$ associated with the quadratic form $\psi\mapsto \int_{\R^2} |\nabla\psi(\textbf{x})|^2 \D \textbf{x} - \alpha\int_\R |\psi(\Gamma(s))|^2 \D s$, where $\alpha>0$, defined on $H^1(\R^2)$.  \label{assd}
 \end{enumerate}
Alternatively one can say that the self-adjoint operator $H_{\Gamma,\alpha}$ acts as the negative Laplacian on functions $\psi\in H^1(\R^2) \cap H^2_\mathrm{loc}(\R^2\setminus\Gamma)$ which have at points of $\Gamma$ the jump of the normal derivative equal to $-\alpha\psi(\textbf{x})$; formally one can write $H_{\Gamma,\alpha}= -\Delta-\alpha\delta(\textbf{x}-\Gamma)$. It may be regarded as a singular version of the previous case, namely the family $H_{\Gamma,V_\varepsilon}$ corresponding to the scaled transverse potentials $v_\varepsilon(u)= \frac{1}{\varepsilon}v(\frac{u}{\varepsilon})$ converges in the norm resolvent sense to $H_{\Gamma,\alpha}$ with $\alpha:= \int_\R v(u)\,\D u\:$ as $\varepsilon\to 0\:$ \cite{EI01, BEHL17}. The analogue of the operator \eqref{transop}, formally $-\textstyle{\frac{\D^2}{\D x^2}}-\alpha\delta(x)$, will be denoted as $h_\alpha$; its ground state is in this case known explicitly, $\epsilon_\alpha:=\inf\sigma(h_\alpha)= -\frac14\alpha^2$.

While the two situations are of primary interest to us, they are particular cases of a more general channel profile. In the spirit of \cite{EL21} one can consider the operator
 \begin{enumerate}[(a)]
 \setcounter{enumi}{4}
 \setlength{\itemsep}{0pt}
 \item $H_{\Gamma,\mu}$ associated with the quadratic form $\psi\mapsto \int_{\R^2} |\nabla\psi(\textbf{x})|^2 \D \textbf{x} - \int_{\R^2} |\psi(\textbf{x})|^2 \D\mu(\textbf{x})$ defined on $H^1(\R^2)$, where $\D\mu(\textbf{x}) = (1-u\gamma(s))\,\D s\D\mu_\perp(u)$  corresponds to a finite positive Borel measure $\mu_\perp$ supported on the interval $(-a,a)$; \label{asse}
 \end{enumerate}
the assumptions \eqref{assc} and \eqref{assd} correspond respectively to $\D\mu_\perp(u)=v(u)\D u$ and $\D\mu_\perp(u)=\alpha\delta$, where $\delta$ is the Dirac measure supported at the origin. The analogue of \eqref{transop}, the self-adjoint operator associated with the form $f\mapsto \int_\R |f'(u)|^2 \D u- \int_\R |f(u)|^2 \D\mu_\perp(u)$ defined on $H^1(\R)$, will be denoted as $h_{\mu_\perp}$; by assumption we have $\epsilon_{\mu_\perp}:=\inf\sigma(h_{\mu_\perp})<0$.

In addition to \eqref{transop} we need also its double-well counterpart
 \begin{equation} \label{doubletrans}
h_{v,\rho} = -\textstyle{\frac{\D^2}{\D x^2}}-v(\rho+x)-v(-\rho-x)
 \end{equation}
and its analogues $h_{\alpha,\rho}$ and $h_{\mu_\perp,\rho}$ corresponding to assumptions \eqref{assd} and \eqref{asse}, respectively. The essential spectrum of these operators is, of course, $\R_+$ and their ground-state eigenvalues will be denoted as $\epsilon_{v,\rho}$ and analogously in the other two situations.

 \begin{proposition} \label{prop:spectess}
Assume \eqref{assa}, \eqref{assb} and \eqref{asse}, then $\sigma_\mathrm{ess}(H_{\Gamma,\mu}) = [\epsilon_{\mu_\perp}, \infty)$ holds provided $\beta>0$. If $\beta=0$, we have $\sigma_\mathrm{ess}(H_{\Gamma,\mu}) = [\epsilon_{\mu_\perp,\rho}, \infty)$ where the function $\rho\mapsto \epsilon_{\mu_\perp,\rho}$ is monotonously increasing with $\epsilon_{\mu_\perp,\infty} := \lim_{\rho\to\infty} \epsilon_{\mu_\perp,\rho} = \epsilon_{\mu_\perp}$.
 \end{proposition}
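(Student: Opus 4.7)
The plan is to establish the two inclusions of the spectral identity separately and then analyse the map $\rho\mapsto\epsilon_{\mu_\perp,\rho}$. Write $\epsilon^\bullet$ for $\epsilon_{\mu_\perp}$ when $\beta>0$ and for $\epsilon_{\mu_\perp,\rho}$ when $\beta=0$, and let $\phi^\bullet$ denote the corresponding normalised positive transverse ground state.

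For the inclusion "$\supseteq[\epsilon^\bullet,\infty)$" I would apply Weyl's criterion. Fix $\lambda=\epsilon^\bullet+k^2$ with $k\ge 0$, pick $\chi\in C_c^\infty((1,2))$ and $t_n\to\infty$, and take
\begin{equation*}
\psi_n(t,\eta) = n^{-1/2}\chi\bigl((t-t_n)/n\bigr)\,\ee^{ikt}\,\phi^\bullet(\eta),
\end{equation*}
where for $\beta>0$ the pair $(t,\eta)$ denotes the Fermi coordinates $(s,u)$ of \eqref{parstrip} along the far straight part of $\Gamma_+$, while for $\beta=0$ it denotes Cartesian coordinates $(x_1,y)$ in the right halfplane, on which $\Omega^a$ consists of two parallel slabs at $y=\pm\rho$ and the transverse part of $H_{\Gamma,\mu}$ is exactly the double-well operator $h_{\mu_\perp,\rho}$. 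In both situations the operator has product form on $\mathrm{supp}\,\psi_n$ and a direct estimate gives $\|\psi_n\|\to 1$, $\|(H_{\Gamma,\mu}-\lambda)\psi_n\|=\OO(1/n)$, together with $\psi_n\rightharpoonup 0$.

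For the reverse inclusion I would invoke the Persson-type characterisation
\begin{equation*}
\inf\sigma_{\mathrm{ess}}(H_{\Gamma,\mu})=\sup_{K\Subset\R^2}\inf\Bigl\{\|\nabla\psi\|^2-\int|\psi|^2\,\D\mu\,:\,\psi\in C_c^\infty(\R^2\setminus K),\,\|\psi\|=1\Bigr\}.
\end{equation*}
For $K$ containing $\Gamma_{\mathrm{int}}$ and a sufficiently large central disc, any admissible $\psi$ meets $\mu$ only along the straight arms of $\Gamma_\pm$. When $\beta>0$ these arms lie in two disjoint Fermi tubes in which the transverse form inequality $\int|\partial_u f|^2\,\D u-\int|f|^2\,\D\mu_\perp\ge\epsilon_{\mu_\perp}\int|f|^2\,\D u$, applied fibrewise and integrated against the non-negative longitudinal kinetic term, yields the lower bound $\epsilon_{\mu_\perp}$. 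When $\beta=0$ the two arms fit jointly inside the Cartesian slab $\{x_1>0,\,|y|<\rho+a\}$, and the analogous fibrewise inequality for $h_{\mu_\perp,\rho}$ yields the lower bound $\epsilon_{\mu_\perp,\rho}$.

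For the behaviour of $\rho\mapsto\epsilon_{\mu_\perp,\rho}$, I would exploit the reflection symmetry $\Phi_\rho(u)=\Phi_\rho(-u)$ of the positive ground state to reduce, via $w=u-\rho$, to the Neumann half-line operator $B_\rho:=-\partial_w^2-\mu_\perp$ on $(-\rho,\infty)$ with Neumann condition at $w=-\rho$. For $\rho$ larger than the diameter of $\mathrm{supp}\,\mu_\perp$, a short Hadamard boundary-variation computation, using $\Phi_\rho'(-\rho)=0$ and the fact that $\mu_\perp$ does not charge $-\rho$, gives
\begin{equation*}
\frac{\D\epsilon_{\mu_\perp,\rho}}{\D\rho}=-\epsilon_{\mu_\perp,\rho}\,\Phi_\rho(-\rho)^2>0,
\end{equation*}
with positivity coming from $\epsilon_{\mu_\perp,\rho}<0$ and the nodeless character of $\Phi_\rho$. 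The limit $\epsilon_{\mu_\perp,\infty}=\epsilon_{\mu_\perp}$ then follows from Neumann bracketing (giving $\epsilon_{\mu_\perp,\rho}\le\epsilon_{\mu_\perp}$) combined with the strong resolvent convergence $B_\rho\to h_{\mu_\perp}$ on $\R$ as $\rho\to\infty$. The principal technical obstacle is the $\beta=0$ case: Fermi coordinates degenerate between the two parallel arms, forcing the analysis into Cartesian coordinates and elevating the transverse operator from a single well to a coupled double well whose ground-state energy depends on the separation through tunneling, which is precisely what causes the downward jump of $\inf\sigma_{\mathrm{ess}}$ in the closed-book limit.
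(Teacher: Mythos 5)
Your argument follows the same route as the paper's: a Weyl sequence built from the transverse ground state over the straight far ends for the inclusion $[\epsilon^\bullet,\infty)\subseteq\sigma_{\mathrm{ess}}(H_{\Gamma,\mu})$, a bracketing/Persson bound for the converse, and for the monotonicity the mirror-symmetry reduction to a Neumann problem at the midpoint combined with the Dauge--Helffer boundary-variation formula $\frac{\D}{\D\rho}\epsilon_{\mu_\perp,\rho}=-\phi_{\mu_\perp,\rho}(0)^2\,\epsilon_{\mu_\perp,\rho}$ (your $\Phi_\rho(-\rho)$ is the same quantity after the shift $w=u-\rho$), with the limit obtained from Neumann bracketing and convergence. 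The paper's proof is only a sketch that defers to \cite{Ex20}, \cite{EI01}, \cite{KKK21} and \cite{DH93} for the details you spell out, so your proposal is a correct elaboration of essentially the same argument.
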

\begin{proof}
Under the assumptions \eqref{assc} or \eqref{assd} the essential spectrum was determined in \cite[Prop.~3.1]{Ex20} and \cite[Prop.~5.1]{EI01} for $\beta>0$, the case $\beta=0$ in the bookcover situation was dealt with in \cite[Prop.~2]{KKK21}. The more general waveguide satisfying the present assumptions can be treated similarly; the core of the argument is the existence of an increasing family of rectangles on which the potential channel is straight (or the parallel potential channels are straight for $\beta=0$) so that one can separate the variables and construct a suitable Weyl sequence combining transversally the ground-state eigenfunction of $h_{\mu_\perp}$ or $h_{\mu_\perp,\rho}$, restricted to intervals of a growing length, with a mollifier in the longitudinal direction. The interaction term in $h_{\mu_\perp,\rho}$ is mirror-symmetric, hence its ground state coincides with that of the single well and the Neumann boundary condition at the distance $\rho$ from its `center'. By the bracketing argument \cite[Sec.~XIII.15]{RS78} the function $\rho\mapsto \epsilon_{\mu_\perp,\rho}$  is non-decreasing. In fact, it is increasing with the indicated limit; one way to see that is to use the formula $\frac{\D}{\D\rho} \epsilon_{\mu_\perp,\rho} = -\varphi_{\mu_\perp,\rho}(0)^2\, \epsilon_{\mu_\perp,\rho}$, where $\varphi_{\mu_\perp,\rho}$ is the real-valued eigenfunction associated with $\epsilon_{\mu_\perp,\rho}$, obtained by an easy modification of the argument used in \cite{DH93} (an alternative is to employ an Agmon-type estimate which shows that the convergence is even exponentially fast).
\end{proof}

\section{Closing the book}\label{s:closing}
\setcounter{equation}{0}

Let us now investigate what happens if we are `closing the book', in other words, how the spectrum of $H_{\Gamma,\mu}$ behaves asymptotically in the limit $\beta \to 0$; our aim is to show that the eigenvalues will then fill the gap between the corresponding essential spectrum thresholds, $\epsilon_{\mu_\perp,\rho}$ and $\epsilon_{\mu_\perp}$. Specifically, we are going to show that for any $\nu \in (\epsilon_{\mu_\perp,\rho},\epsilon_{\mu_\perp})$ there are eigenvalues of $H_{\Gamma,\mu}$ in the interval $(\epsilon_{\mu_\perp,\rho},\nu)$ for $\beta$ small enough and to estimate their accumulation rate.
\begin{theorem} \label{theo:closingthebookBetaSmall}
Assume \eqref{assa}, \eqref{assb} and \eqref{asse}, then for any $\nu \in ( \epsilon_{\mu_\perp,\rho},\epsilon_{\mu_\perp})$ there is a constant $C_\nu>0$ such that $\dim E_{H_{\Gamma,\mu}}(\epsilon_{\mu_{\perp},\rho},\nu) \ge C_\nu \beta^{-1}$ holds for the corresponding spectral projection of $H_{\Gamma,\mu}$ provided that $\beta$ is small enough.
\end{theorem}
\begin{proof}
It is sufficient to find the appropriate number of linearly independent test functions for which the quadratic form associated with $H_{\Gamma,\mu}$ satisfies $\mathbf{q}[\phi] < \nu\|\phi\|^2$. We will construct such functions with the support in the part of the plane where $\Omega^a$ is straight, choosing them even with the respect to the axis of the angle between the two asymptotes, $\phi(x,y)=\phi(x,-y)$. In the upper halfplane we choose a pair of points of $\Gamma$ with the coordinates $s_1$, which can without loss of generality refer to the place where the straight parts begins, and $s_2>s_1$. The support of $\phi$ for $y\ge 0$ will be a semiinfinite strip $\Sigma$ the boundaries of which will be lines normal to $\Gamma$ at the points $s_1, s_2$; we divide it into the rectagular part $\Sigma_r$ and the wedge-shaped $\Sigma_w$ as sketched in Fig.~1.
\begin{figure}[h!]
\centering
    \includegraphics[clip, trim=6cm 19.2cm 6cm 4cm, angle=0, width=0.8\textwidth]{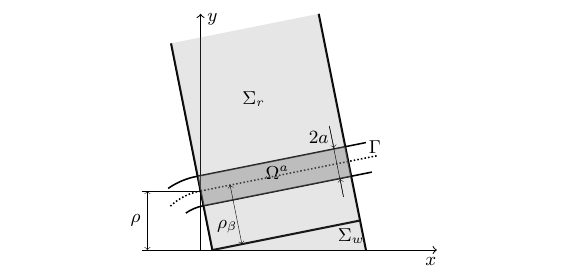}
\caption{The regions used in the proof of Theorem~\ref{theo:closingthebookBetaSmall}}
\end{figure}

It is obvious that in $\Sigma$ one can use the parallel coordinates; those referring to Cartesian $(x,y)$ will be denoted as $s(x,y)$ and $u(x,y)$. With this licence we use the following test function Ansatz,
\begin{equation}
\label{eq:test}
    \phi(x,y) \defeq \big[\chi_{\Sigma_r}(x,y) \varphi_{\rho_\beta} (u(x,y)+\rho_\beta) + \chi_{\Sigma_w}(x,y) \varphi_{\rho_\beta}(0)\big] f(s(x,y)), \;\; y\ge 0,
\end{equation}
where $\varphi_{\rho_{\beta}}$ is the eigenfunction associated with the eigenvalue $\epsilon_{\mu_{\perp},\rho_{\beta}}$ of operator \eqref{doubletrans} (for simplicity we drop the subscript $\mu_\perp$), $\rho_\beta = \rho\sec\frac{\beta}{2}$, and
\begin{equation}
\label{eq:DiscSpectrumDefFofX}
    f(s) \in C^2\left(\R \right) \quad\text{with}\quad \mathrm{supp\,}f \subset \left[s_{1},s_{2}\right],\;f\left(s_{1}\right) = f\left(s_{2}\right)=0\,;
\end{equation}
the functions \eqref{eq:test} are obviously admissible test functions belonging to $H^1(\R^2)$.

Let us now express the quadratic form starting from its kinetic term, $\|\nabla\phi\|^2$. Using the symmetry of $\phi$ and the fact that the parallel coordinates in $\Sigma$ are obtained by rotation of the Cartesian ones, we get
\begin{align}
\|\nabla\phi\|^2 & = 2 \int_\Sigma |\nabla_{s,u} \phi(x(s,u),y(s,u)) |^2\, \D s \D u \label{kinetic} \\[.3em]
& = 2 \int_{\Sigma_r} \big[ |f^{\prime}(s)|^2 |\varphi_{\rho_\beta}(u +\rho_{\beta}))|^2
+ |f(s)|^2 |\varphi_{\rho_\beta}^{\prime}(u +\rho_{\beta}))|^2\big]\, \D s \D u \nonumber \\[.3em]
& \quad + 2 \int_{\Sigma_w} |f^{\prime}(s)|^2 |\varphi_{\rho_\beta}(0)|^2\, \D s \D u \nonumber \\[.3em]
& = \|f'\|^2\|\varphi_{\rho_\beta}\|^2 + \|f\|^2\|\varphi'_{\rho_\beta}\|^2 + \|f'\|^2 |\varphi_{\rho_\beta}(0)|^2 L \tan\textstyle{\frac{\beta}{2}}, \nonumber
\end{align}
where $L:=s_2-s_1$, the norms refer to $L^2(s_1,s_2)$ and $L^2(\R)$, respectively, and  when integrating over $u$ we used the fact that $\varphi_{\rho_\beta}$ is an even function. For the interaction part of the quadratic form we get similarly
\begin{equation} \label{potpart}
    \|\phi\|^2_\mu  =  2 \int_{s_1}^{s_{2}} \int_{-a}^{a} |f(s)|^2 |\varphi_{\rho_\beta}(u)|^2\, \D\mu_{\perp}(u) \D s = \|f\|^2 \int_\R |\varphi_{\rho_\beta}(u)|^2\, \D\mu_{\perp}(u)
\end{equation}
and the norm of the test function is
\begin{equation} \label{testnorm}
    \|\phi\|^2 = \|f\|^2 \big(\|\varphi_{\rho_\beta}\|^2 + |\varphi_{\rho_\beta}(0)|^2 L \tan\textstyle{\frac{\beta}{2}}\big).
\end{equation}
Putting now \eqref{kinetic}--\eqref{testnorm} together, we can express the shifted quadratic form as
\begin{align*}
& \mathbf{q}[\phi] -\nu\|\phi\|^2 = \|f'\|^2 \Big[ \|\varphi_{\rho_\beta}\|^2 + |\varphi_{\rho_\beta}(0)|^2 L \tan\textstyle{\frac{\beta}{2}}\Big] \\[.3em]
& \qquad + \|f\|^2 \Big[ \|\varphi'_{\rho_\beta}\|^2 - \int_\R |\varphi_{\rho_\beta}(u)|^2\, \D\mu_{\perp}(u) -\nu \big(\|\varphi_{\rho_\beta}\|^2 + |\varphi_{\rho_\beta}(0)|^2 L \tan\textstyle{\frac{\beta}{2}}\big) \Big] \\[.3em]
& = \|f'\|^2 \Big[ \|\varphi_{\rho_\beta}\|^2 + |\varphi_{\rho_\beta}(0)|^2 L \tan\textstyle{\frac{\beta}{2}} \Big]
 + \|f\|^2 \Big[ \big(\epsilon_{\mu_\perp,\rho_\beta}-\nu \big) \|\varphi_{\rho_\beta}\|^2 -\nu |\varphi_{\rho_\beta}(0)|^2 L \tan\textstyle{\frac{\beta}{2}}\big) \Big],
\end{align*}
where we have used the fact that $\varphi_{\rho_\beta}$ is by assumption the ground-state eigenfunction of $h_{\mu_\perp,\rho_\beta}$. The last expression is negative provided
\begin{equation}
\label{eq:negform}
    \frac{\|f'\|^2}{\|f\|^2} < \frac{\nu - \epsilon_{\mu_\perp,\rho_\beta} + \nu\eta^2_{\rho_\beta} L \tan\textstyle{\frac{\beta}{2}}}{1 + \eta^2_{\rho_\beta} L \tan\textstyle{\frac{\beta}{2}}},
\end{equation}
where we set $\eta_{\rho_\beta}:= \frac{|\varphi_{\rho_\beta}(0)|}{\|\varphi_{\rho_\beta}\|}$ for the sake of brevity; this quantity is bounded as a function of the angle $\beta$ and $\lim_{\beta\to 0} \eta_{\rho_\beta} = \eta_\rho >0$. The right-hand side of inequality \eqref{eq:negform}, which we denote as $R_\nu(L,\beta)$, is continuous with respect to the parameters and for small enough $\beta$ it is positive tending to $\nu - \epsilon_{\mu_\perp,\rho}$ as $\beta\to 0$.

The left-hand side is in view of \eqref{eq:DiscSpectrumDefFofX} nothing else than the normalized quadratic form of the Dirichlet Laplacian on an interval of length $L$, hence the maximum number $n_\nu$ of mutually orthogonal trial functions making the form $\mathbf{q}[\cdot]-\nu\|\cdot\|^2$ negative has to satisfy the inequality $\big(\frac{\pi n_\nu}{L}\big)^2 < R_\nu(L,\beta)$. The indicated properties of the right-hand side imply, in particular, that there are positive $a_\nu,\,b_\nu$ such that $R_\nu(L,\beta)>a_\nu$ holds whenever $L\tan\frac{\beta}{2} < b_\nu$, and consequently, we get for $n_\nu$ the bound
$$
n_\nu < \frac{L}{\pi}\sqrt{a_\nu} < \frac{\sqrt{a_\nu}}{\pi}\,\frac{b_\nu}{\tan\frac{\beta}{2}} < \frac{2b_\nu\sqrt{a_\nu}}{\pi}\,\beta^{-1},
$$
which is the result we have set out to prove.
\end{proof}

\section{Parallel asymptotes, weak coupling}\label{s:weak}
\setcounter{equation}{0}

Let us now turn to the situation when the `book is closed', $\beta=0$. We know from Proposition \ref{prop:spectess} how the essential spectrum looks in this case; what we want to know is whether the discrete spectrum is nonempty. It appear that the answer depends on the strength of the transversal coupling. To this aim we introduce a coupling constant $\lambda>0$ into the picture; modifying assumption~\eqref{asse} we consider operator $H_{\Gamma,\lambda\mu}$ associated with the quadratic form $\psi\mapsto \int_{\R^2} |\nabla\psi(\textbf{x})|^2 \D \textbf{x} - \lambda \int_{\R^2} |\psi(\textbf{x})|^2 \D\mu(\textbf{x})$ defined on $H^1(\R^2)$. First we focus on the situation when the coupling is weak.
\begin{theorem} \label{thm:weakcoupl}
Assume \eqref{assa}, \eqref{assb} and \eqref{asse}. Then $\sigma_{disc}\left(H_{\Gamma,\lambda\mu}\right)$ is empty for all $\lambda$ small enough.
\end{theorem}
We will need the following claim extending the well-known result of \cite{Si76}:
\begin{lemma} \label{l:weak1Dmeasure}
Let $h_{\lambda\eta}$ be the Schr\"odinger operator in $L^2(\R)$ associated with the quadratic form $f\mapsto \int_\R |f'(u)|^2 \D u - \lambda\int_\R |f(u)|^2 \D\eta(u)$ defined on $H^1(\R)$, where $\eta$ is a compactly supported finite positive Borel measure. For all sufficiently small positive $\lambda$, the operator has exactly one eigenvalue $\epsilon(\lambda)$ which behaves asymptotically as
\begin{equation}
\label{weak1D}
\sqrt{-\epsilon(\lambda)} = \frac12\lambda\eta(\R) + \OO(\lambda^2).
\end{equation}
\end{lemma}
\begin{proof}
As in the regular case, on can use the Birman-Schwinger principle: by \cite[Lemma~2.3]{BEKS94} we have to check that the operator $\lambda R_{\eta\eta}(\kappa)$, the trace of the resolvent $\big(-\frac{\D^2}{\D x^2}+\kappa^2\big)^{-1}$ at the space $L^2(\R,\D\eta)$, has for small $\lambda$ a simple eigenvalue one. The trace acts on $f\in L^2(\R,\D\eta)$ as $R_{\eta\eta}(\kappa)f = G_\kappa \star f$ $\,\eta$-almost everywhere with $G_\kappa = \frac{1}{2\kappa} \e^{-\kappa|\cdot|}$ defines the kernel of the one-dimensional Laplacian resolvent. As in \cite{Si76} we split the operator into two parts, $R(\kappa)=L_\kappa+M_\kappa$, where $L_\kappa$ is the integral operator with the kernel $\frac{1}{2\kappa}$ whose range is one-dimensional consisting of constant functions, and $M_\kappa$ with the kernel $\frac{1}{2\kappa} \big( \e^{-\kappa|x-y|}-1 \big)$. The last opertor is obviously bounded which means that $I-\lambda M_\kappa$ is invertible for $\lambda$ small enough; using then the identity
$$
I-\lambda R(\kappa) = (I-\lambda M_\kappa) \big( I- (I-\lambda M_\kappa)^{-1} \lambda L_\kappa\big),
$$
we see that the task is reduced to finding $\mathrm{Ker} \big( I- (I-\lambda M_\kappa)^{-1} \lambda L_\kappa\big)_{\eta\eta}$, and since we have noted that the operator is one-dimensional, the explicit form of $L_\kappa$ yields the equation
$$
\kappa \eta(\R) = \frac{\lambda}{2} \int_{\R^2} (I-\lambda M_\kappa)^{-1}(x,y)\, \D\eta(x)\,\D\eta(y).
$$
The rest of the proof proceeds as in the regular case; to complete the proof one has to check that the right-hand side is a real analytic function of $\lambda$ up to the point $\lambda=0$ and to expand it there in the leading order.
\end{proof}

\begin{proof}[Proof of Theorem~\ref{thm:weakcoupl}]
We use again a bracketing argument and impose additional Neumann boundary conditions on the $y$-axis obtaining thus the inequality
\begin{equation}
\label{eq:Nbrack}
H_{\Gamma,\lambda \mu} \geq H_{\Gamma,\lambda \mu}^\mathrm{c} \oplus H_{\Gamma,\lambda \mu}^\mathrm{a},
\end{equation}
where the first operator in the direct sum refers to the curved part in the left halfplane and the second to the parallel channels in the right one. It is thus sufficient to check that none of the two operators has spectral points below $\epsilon_{\lambda\mu_\perp,\rho}$. The spectrum of $H_{\Gamma,\lambda \mu}^\mathrm{a}$ is purely absolutely continuous covering the interval $[\epsilon_{\lambda\mu_\perp,\rho},\infty)$, and choosing for $\eta$ in Lemma~\ref{l:weak1Dmeasure} the `double-well measure', $\D\eta(u)=\D\mu_\perp(\rho+u)+ \D\mu_\perp(-\rho-u)$, we see that
\begin{equation}
\label{eq:right_est}
\inf\sigma(H_{\Gamma,\lambda \mu}^\mathrm{a}) = \epsilon_{\lambda\mu_\perp,\rho} = \lambda^2 \mu_\perp(\R)^2 + \OO(\lambda^3) \quad\text{as}\;\; \lambda\to 0.
\end{equation}

Consider next the operator $H_{\Gamma,\lambda \mu}^\mathrm{c}$ and compare it with $H_{\tilde\Gamma,\lambda \mu}$ where $\tilde\Gamma$ a closed curve being a union of $\Gamma_{int}$ and its mirror image with respect to the $y$-axis. By construction $\tilde\Gamma$ is $C^1$-smooth, and since it is a finite loop, the negative spectrum of $H_{\tilde\Gamma,\lambda \mu}$ is discrete. In view of the symmetry, the operator allows for a parity decomposition in the $x$ direction, $H_{\tilde\Gamma,\lambda \mu} = H_{\tilde\Gamma,\lambda \mu}^\mathrm{sym} \oplus H_{\tilde\Gamma,\lambda \mu}^\mathrm{asym}$ and it holds $H_{\tilde\Gamma,\lambda \mu}^\mathrm{sym} \le H_{\tilde\Gamma,\lambda \mu}^\mathrm{asym}$, in particular, $\inf\sigma(H_{\tilde\Gamma,\lambda \mu}) = \inf\sigma(H_{\tilde\Gamma,\lambda \mu}^\mathrm{sym}) = \inf\sigma(H_{\Gamma,\lambda \mu}^\mathrm{c})$. By assumption, the measure $\mu_\perp$ is positive, and the same is, of course, true for $\mu$. This allows us to use the result of \cite{KL14} by which $H_{\tilde\Gamma,\lambda \mu}$ has for all $\lambda$ small enough a single eigenvalues which behaves asymptotically as follows,
\begin{equation}
\label{eq:loop_est}
\epsilon_0(\lambda) = -\big(C_\mu+o(1)\big) \exp\Big(-\frac{4\pi}{\lambda\mu(\R^2)}\Big) \quad\text{as}\;\; \lambda\to 0,
\end{equation}
where $C_\mu$ is a positive constant depending on the measure $\mu$. A comparison of \eqref{eq:right_est} and \eqref{eq:loop_est}, taking into account that $\mu_\perp(\R)>0$, implies in view of \eqref{eq:Nbrack} that for sufficiently small $\lambda$ the spectrum of $H_{\Gamma,\lambda \mu}$ below $\epsilon_{\lambda\mu_\perp,\rho}$ is empty; this concludes the proof.
\end{proof}
 \begin{remark} \label{rem:sign_change}
{\rm The assumption \eqref{asse} includes positivity of the measure. This may not be necessary, for instance, a regular potential of assumption \eqref{assc} might be sign-changing. The claim of Theorem~\ref{thm:weakcoupl} remains nevertheless valid in such a situation even if $\int_\R v(x)\,\D x=0$. Indeed, we know from \cite{Si76} that the weakly coupled state then still exists and it satisfies $\sqrt{-\epsilon(\lambda)} = \frac14\lambda^2\int_{\R^2} v(x)|x-y|v(y)\, \D x\,\D y + \OO(\lambda^3)$, hence the essential spectrum threshold behaves as $\OO(\lambda^4)$. We cannot apply the result of \cite{KL14} directly to get \eqref{eq:loop_est} because the asymptotics was obtained under the positivity assumption -- see, however, Remark~3.5 of the paper -- it is easy to bypass this limitation. If $\mu_\perp$ is sign-changing, we can write it as a difference of two positive measures with disjoint supports, $\mu_\perp = \mu_\perp^{(+)} - \mu_\perp^{(-)}$, and the same decomposition applies to $\mu$. Since $H_{\tilde\Gamma,\lambda \mu} \ge H_{\tilde\Gamma,\lambda \mu^{(+)}}$, the eigenvalue $\epsilon_0(\lambda)$ is bounded from below by the right-hand side of \eqref{eq:loop_est} with $\mu(\R^2)$ replaced by $\mu^{(+)}(\R^2)$ tending exponentially fast to zero, so for small enough $\lambda$ it is again the essential spectrum threshold which dominates. }
 \end{remark}

\section{Parallel asymptotes, strong coupling}\label{s:strong}
\setcounter{equation}{0}

In contrast to the weak coupling, the strong one offers a wider variety of asymptotic regimes referring to particular subclasses of potentials. In this section we focus on three of them.

\subsection{Leaky curves}\label{ss:leaky}

Consider first a `leaky curve', that is, operator $H_{\Gamma,\alpha}$ with the attractive $\delta$ interaction supported by the curve $\Gamma$ with parallel asymptotes, $\beta=0$.

 \begin{proposition} \label{thm:leaky}
Assume \eqref{assa} and \eqref{assd}. If, in addition, $\Gamma\in C^4(\R)$, then $\sigma_\mathrm{disc}(H_{\Gamma,\alpha}) \ne \emptyset$ holds for all $\alpha$ large enough. Moreover, the number of eigenvalues (with the multiplicity taken into account) does not exceed that of the Schr\"odinger operator $S_\Gamma= -\frac{\D^2}{\D s^2} - \frac14\gamma(s)^2$ on $L^2(\R)$, the bound being saturated as $\alpha\to\infty$.
 \end{proposition}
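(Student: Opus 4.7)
The plan adapts the Exner--Yoshitomi strong-coupling framework for leaky curves to the present parallel-asymptote geometry, where the essential spectrum starts at $\epsilon_{\alpha,\rho}$, which by Proposition~\ref{prop:spectess} lies below $\epsilon_\alpha=-\tfrac14\alpha^2$ by an amount that tunneling between the two straight branches of $\Gamma$ makes exponentially small in~$\alpha\rho$. I would first apply a Dirichlet--Neumann bracketing along $\partial\Omega^a$ of the tubular neighborhood of $\Gamma$, permissible for any $a<\rho$ by assumption~\eqref{assb}. Outside the tube the interaction is absent and the free Laplacian with either boundary condition contributes nothing below zero, so any eigenvalue beneath $\epsilon_{\alpha,\rho}<0$ must come from the tube piece.

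Inside the tube I would pass to Fermi coordinates via \eqref{parstrip} and apply the standard unitary $U\psi(s,u)=(1-u\gamma(s))^{1/2}\psi(x(s,u))$, which rewrites the tube form as the one generated by
\begin{equation*}
\tilde H_\alpha = -\partial_s(1-u\gamma)^{-2}\partial_s - \partial_u^2 + W(s,u) - \alpha\delta(u),
\end{equation*}
with $W$ the familiar curvature-induced effective potential involving $\gamma,\gamma',\gamma''$, well defined under the hypothesis $\Gamma\in C^4(\R)$. Bounding $1-u\gamma$ between $1\mp a\|\gamma\|_\infty$ and splitting further in $u$ by Dirichlet/Neumann conditions yields a two-sided sandwich
\begin{equation*}
h^-_{\alpha,a}\otimes I + I\otimes L^-_\Gamma \,\le\, \tilde H_\alpha \,\le\, h^+_{\alpha,a}\otimes I + I\otimes L^+_\Gamma,
\end{equation*}
where $h^\pm_{\alpha,a}$ are transverse operators $-\partial_u^2-\alpha\delta(u)$ on $(-a,a)$ with Neumann, resp.\ Dirichlet conditions at $\pm a$, with ground states $-\tfrac14\alpha^2+\OO(\ee^{-\alpha a})$, and $L^\pm_\Gamma = -\partial_s^2 - \tfrac14\gamma^2 + r_\pm(\cdot\,;a)$ on $L^2(\R)$ with $r_\pm=\OO(a)$ uniformly on the support of~$\gamma$.

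The min--max principle then gives, on the one hand, that the number of eigenvalues of $H_{\Gamma,\alpha}$ below $\epsilon_{\alpha,\rho}$ is bounded above by the count of eigenvalues of the upper tensor sum below the same threshold; after subtracting $\tfrac14\alpha^2$ this becomes $\#\{j:\mu_j(L^+_\Gamma)<\epsilon_{\alpha,\rho}-\epsilon_\alpha+o(1)\}$, which in the limit $\alpha\to\infty$ equals $\#\{j:\mu_j(S_\Gamma)<0\}$ once $a$ is fixed small enough, yielding the claimed upper bound and its saturation. For the matching lower bound---and hence also for the existence statement, since \eqref{angle} with $\beta=0$ forces $\int\gamma\,\D s=\pi\neq 0$ so that $-\tfrac14\gamma^2$ is a non-trivial non-positive compactly supported potential and $S_\Gamma$ accordingly has at least one negative eigenvalue---I would use trial vectors $\psi_j(s,u)=\chi(u/a)f_j(s)\phi_\alpha(u)$ with $\phi_\alpha(u)=\sqrt{\alpha/2}\,\ee^{-\alpha|u|/2}$ the transverse ground state, $\chi$ a smooth cutoff, and $\{f_j\}$ eigenfunctions of $S_\Gamma$ associated to its negative eigenvalues $\mu_j$; a Rayleigh-quotient computation gives
\begin{equation*}
\frac{\langle\psi_j,H_{\Gamma,\alpha}\psi_j\rangle}{\|\psi_j\|^2} = -\tfrac14\alpha^2 + \mu_j + o(1),
\end{equation*}
which lies below $\epsilon_{\alpha,\rho}$ for every sufficiently large~$\alpha$.

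The main obstacle is the interplay of two small parameters: the bracketing introduces an $\OO(a)$ error in the longitudinal operator, while the window $(\epsilon_{\alpha,\rho},\epsilon_\alpha)$ in which the candidate eigenvalues must fit has only exponentially small width in $\alpha\rho$. One must first fix $a$ so small that the $\OO(a)$ remainder does not wash out the negative spectrum of $S_\Gamma$, and only afterwards send $\alpha\to\infty$, quantifying the splitting $\epsilon_\alpha-\epsilon_{\alpha,\rho}$ (say by an Agmon-type bound) to confirm that it is dominated by the $|\mu_j|$; arranging this ordering uniformly for all the counted eigenvalues simultaneously is the delicate bookkeeping step on which the asymptotic saturation hinges.
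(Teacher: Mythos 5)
Your outline reconstructs, essentially from scratch, the very result the paper simply cites: its proof of Proposition~\ref{thm:leaky} is a two-line appeal to the strong-coupling asymptotics $\epsilon_j(\alpha)=-\frac14\alpha^2+\eta_j+\OO(\alpha^{-1}\ln\alpha)$ of \cite{EY01} (see also \cite[Cor.~10.3.1]{EK15}), plus the observations that the non-parallel-asymptote hypothesis made there is never used in that proof and that $S_\Gamma$ has a nonempty finite discrete spectrum because $-\frac14\gamma^2$ is a nontrivial, attractive, bounded, compactly supported potential. Your remark that \eqref{angle} with $\beta=0$ forces $\int\gamma\,\D s=\pi\ne0$ is exactly the right justification for that nontriviality, and your insistence on comparing against the lowered threshold $\epsilon_{\alpha,\rho}<-\frac14\alpha^2$ rather than against $-\frac14\alpha^2$ itself is the one genuinely new feature of the parallel-asymptote case; the paper leaves it implicit, but since the $\eta_j$ are fixed negative numbers while $\epsilon_\alpha-\epsilon_{\alpha,\rho}$ is exponentially small, the asymptotic formula \eqref{leakyasympt} alone already places the eigenvalues below the essential spectrum for large $\alpha$. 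So the route is the same, only spelled out; the machinery you describe (Fermi coordinates, the unitary $U$, the curvature-induced potential needing $\Gamma\in C^4$, the tensor sandwich, trial functions $f_j\phi_\alpha$) is the content of the cited proof.

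Two corrections to the execution. First, the counting step has the bracketing direction reversed: if $A\le B$ in the form sense, min--max gives $N(B;\lambda)\le N(A;\lambda)$ for the counting functions, so the \emph{upper} (Dirichlet) tensor sum yields a \emph{lower} bound on the number of eigenvalues; the upper bound on the count --- the one needed for ``does not exceed'' --- must come from the Neumann/lower side $h^-_{\alpha,a}\otimes I+I\otimes L^-_\Gamma$. Second, your order of limits (fix $a$, then $\alpha\to\infty$) leaves a permanent $\OO(a)$ uncertainty in the longitudinal eigenvalues, so by itself it only identifies the count of $S_\Gamma$ up to possible threshold coincidences of $L^\pm_\Gamma$; in \cite{EY01} this is resolved by coupling the tube width to the coupling constant, $a=a(\alpha)\sim\alpha^{-1}\ln\alpha\to0$, which kills the $\OO(a)$ remainder and is precisely what produces the $\OO(\alpha^{-1}\ln\alpha)$ error term in \eqref{leakyasympt}. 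With those two adjustments your argument becomes a correct, self-contained version of the cited proof.
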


\begin{proof}
The claim follows from the strong-coupling asymptotics of $\sigma_\mathrm{disc}(H_{\Gamma,\alpha})$ derived in \cite{EY01}, see also \cite[Cor.~10.3.1]{EK15}. It was stated there for curves the asymptotes of which were not parallel, however, this assumption was not used in its proof; the vital assumptions are satisfied since $\Gamma\in C^4(\R)$ and its curvature is compactly supported. The $j$th eigenvalues of $H_{\Gamma,\alpha}$ then behaves asymptotically as
 \begin{equation} \label{leakyasympt}
\epsilon_j(\alpha) = -\frac14\alpha^2 + \epsilon_j + \mathcal{O}(\alpha^{-1}\ln\alpha) \quad\text{for}\;\; \alpha\to\infty,
 \end{equation}
where $\epsilon_j<0$ is the $j$th eigenvalue of the comparison operator $S_\Gamma$. The curvature-induced potential of $S_\Gamma$ is nonzero, attractive, bounded and compactly supported, hence $\sigma_\mathrm{disc}(S_\Gamma)$ is nonempty and finite.
\end{proof}

\subsection{An interlude: Dirichlet guides}\label{ss:dirichlet}

To consider another strong-coupling asymptotic, we shall extend a known result about the discrete spectrum of curved Dirichlet strips \cite[Thm.~1.1]{EK15}. Comparing the assumption concerning the Dirichlet strips in that book with those about the potential support used here, we see that \emph{(i)} of \cite[Sec.~1.1]{EK15} guaranteeing the existence of parallel coordinates coincides with \eqref{assb}, while \eqref{assa} is weaker than \emph{(ii)} because it requires the curvature only to be piecewise continuous. For the needs of the present paper it would be sufficient to have the curvature compactly supported which would imply assumptions \emph{(iii)}--\emph{(v)} but we will not limit ourselves here to this case and prove the result for strips $\Omega^a$ which are only asymptotically straight.

The object to consider is a Dirichlet waveguide with a generally non-flat bottom, that is, the operator $H_{\mathrm{D},v}:= -\Delta_\mathrm{D}^{\Omega^a} - V(x)$, where $V(x)$ is the potential \eqref{potential} corresponding to $v$ satisfying assumption \eqref{assc}. Let $h_{\mathrm{D},v}$ be the operator on $L^2(I_a)$ with the domain $H^2(I_a)\cap H_0^1(I_a)$ acting as \eqref{transop}, in other words, the transverse Schr\"odinger operator on $I_a$ with Dirichlet boundary conditions at $\pm a$. The spectrum of this operator is simple and purely discrete; we denote $\epsilon_{\mathrm{D},v}:= \inf\sigma(h_{\mathrm{D},v})$ and $\chi_0$ will be the eigenfunction corresponding to this eigenvalue.

\begin{theorem}
\label{theo:ExtensionDirichletGuides}
Let $\Omega^a$ satisfy assumptions \eqref{assa} and \eqref{assb}, and $v$ satisfy assumption \eqref{assc}. Let further $\lim_{|s|\to\infty} \gamma(s)=0$, then the operator $H_{\mathrm{D},v}$ has at least one eigenvalue below $\inf\sigma_\mathrm{ess}(H_{\mathrm{D},v})= \epsilon_{\mathrm{D},v}$ unless $\gamma=0$ identically.
\end{theorem}
\begin{proof}
The argument follows the proof of the mentioned Theorem~1.1 in \cite{EK15} except that we weaken the regularity requirements on $\Gamma$ in the spirit of \cite{KS12}. The main tool is the quadratic form of the operator $H_{\mathrm{D},v}-\epsilon_{\mathrm{D},v}I$ which can be, using the parallel coordinates, written as
\begin{equation}
\label{eq:HQWExtendedQuadraticForm}
\mathbf{q}\left[\psi \right] \defeq \norm{g^{-1/4} \partial_s \psi}^2 +\norm{g^{1/4} \partial_u \psi}^2 - \norm{g^{1/4} V\psi}^2 - \epsilon_{\mathrm{D},v} \norm{g^{1/4} \psi}^2
\end{equation}
for any $\psi\in H^1(\R\times I_a)$; we recall that $\sqrt{g(s,u)}=(1-u\gamma(s))$. To find the essential spectrum threshold, we use Neumann bracketing, adding Neumann condition at the segments perpendicular to $\Gamma$ at $s=\pm s_0$. This allows us to estimates $H_{\mathrm{D},v}$ from below by the direct sum $H_{\mathrm{D},v}^{(-)} \oplus H_{\mathrm{D},v}^{(0)} \oplus H_{\mathrm{D},v}^{(+)}$. The middle part refers to a compact region being thus irrelevant from the point of the essential spectrum. The shifted quadratic forms of the other two parts can be estimated from below as follows,
$$
\mathbf{q}^{(\pm)}\left[\psi \right] \ge (1-a\|\gamma_\pm\|_\infty)^{1/2}\big[\norm{\partial_s \psi}^2_\pm +\norm{\partial_u \psi}^2_\pm - \norm{V\psi}^2_\pm \big] - \epsilon_{\mathrm{D},v} (1+a\|\gamma_\pm\|_\infty)^{1/2} \norm{\psi}^2_\pm,
$$
where the norms refer to $L^2((s_0,\infty)\times I_a)$ and $L^2((-\infty,-s_0)\times I_a)$, respectively, and $\gamma_\pm$ are the restrictions of $\gamma$ to the appropriate intervals, and we have used the inequality $(1-a\|\gamma_\pm\|_\infty)^{1/2} \le (1+a\|\gamma_\pm\|_\infty)^{-1/2}$. The parts in the square bracket are nothing but quadratic forms of the operators on the respective parts of the straight channel and as such they they bounded from below by $\epsilon_{\mathrm{D},v} \norm{\psi}^2_\pm$, so we have
$$
\mathbf{q}^{(\pm)}\left[\psi \right] \ge - \epsilon_{\mathrm{D},v} \big[(1+a\|\gamma_\pm\|_\infty)^{1/2} - (1-a\|\gamma_\pm\|_\infty)^{1/2}\big] \norm{\psi}^2_\pm,
$$
and since $\gamma(s)$ tends to zero as $|s|\to\infty$ by assumption, the left-hand side can be made arbitrarily close to zero by choosing $s_0$ large enough.

The prove the second claim, one has to find a trial function from the form domain of $H_{\mathrm{D},v}$ which makes the left-hand side of \eqref{eq:HQWExtendedQuadraticForm} negative. We use the idea proposed in this context in \cite{GJ92} and choose the function in the form $\psi \defeq \phi_{\varsigma} \chi_0 + \varepsilon f$, where $f$ will be chose later and $\phi_\varsigma$ is a suitable mollifier, say
\begin{equation} \label{mollif}
 \phi_{\varsigma} \defeq \begin{cases*}
 \phi(s) & for $ |s| \le s_1$ \\
\phi \left(s_0\,\mathrm{sgn}s + \varsigma(s-s_0\,\mathrm{sgn}s) \right)& for $|s| > s_1$
\end{cases*},
\end{equation}
with $\phi \in C_0^{\infty}(\R)$ such that $\phi(s)=1$ holds for $s \in [-s_1, s_1]$ for some $s_1>0$. We have
\begin{align}
\label{eq:HQWExtendedFive}
& \mathbf{q}\left[\phi_\varsigma \chi_0\right] =  \int_{\mathbb{R}} \langle g^{-1/2} \rangle(s) |\phi'_\varsigma(s)|^2 \D s + \iint g^{1/2}(s,u) |\chi'_0(u)|^2 |\phi_\varsigma(s)|^2 \D s\D u \\ \nonumber
& \quad - \iint  g^{1/2}(s,u) v(u) |\chi_0(u)|^2 |\phi_\varsigma(s)|^2 \D s\D u - \epsilon_{\mathrm{D},v} \iint  g^{1/2}(s,u) |\chi_0(u)|^2 |\phi_\varsigma(s)|^2 \D s\D u,
\end{align}
where $\langle g^{-1/2} \rangle(s) = \int_{-a}^a g^{-1/2}(s,u)\,\D u$ is the transverse average of the inverse Jacobian. By Fubini theorem the order of integration in the last three terms makes no difference. Eigenfunctions of $h_{\mathrm{D},v}$ may be chosen real and by assumption we have
$$
-\chi_0''(u) -v(u)\chi_0(u) = \epsilon_{\mathrm{D},v}\chi_0(u)\,;
$$
an easy integration by parts with $(1-u\gamma(s))\chi_0(u)\D u$ using the fact, that $\chi_0(\pm a)=0$, shows that the sum of the three terms is zero. The remaining term is easily estimated; we get the bound
\begin{equation}
\label{eq:HQWExtendedFirstPartLeq}
 \mathbf{q}\left[\phi_\varsigma \chi_0\right] \leq \frac{\varsigma}{(1- a \norm{\gamma}_{\infty})^{1/2}} \norm{\phi'}^2
\end{equation}
showing that by the choice of $\varsigma$ the positive contribution from the trial function tails can be made arbitrarily small.

To complete the construction, we have to choose the function $f$. We pick it from $C_0^\infty((-s_1,s_1)\times I_a)$ in which case $\psi\in H^1(\R\times I_a)$ and
\begin{equation}
\label{GJ}
 \mathbf{q}[\psi] = \mathbf{q}\left[\phi_\varsigma \chi_0\right] +2\varepsilon\, \mathrm{Re}\, \mathbf{q}(\phi_\varsigma \chi_0,f) +\varepsilon^2 \mathbf{q}[f],
\end{equation}
where $\mathbf{q}(\cdot,\cdot)$ is the corresponding sesquilinear form. Using the fact that the supports of $\phi'_\varsigma\chi_0$ and $f$ are disjoint, we evaluate easily the linear term coefficient,
$$
2\,\mathrm{Re}\, \mathbf{q}(\phi_\varsigma \chi_0,f) = \big( g^{1/2}(\chi'_0 -V\chi_0 - \epsilon_{\mathrm{D},v}\chi_0),f\big).
$$
Since $f$ is picked from an infinitely dimensional space, we can obviously choose it in such a way that the coefficient is nonzero and, say, negative. For small $\varepsilon>0$ the linear term dominates over the quadratic one; then we can choose $\varepsilon$ so that the sum of the last two terms on the right-hand side of \eqref{GJ} is negative, and subsequently to choose $\varsigma$ in order not to spoil the negativity of the whole estimating expression.
\end{proof}

Note that we have identified only the threshold of $\sigma_\mathrm{ess}(H_{\mathrm{D},v})$. Under stronger regularity assumptions one can prove that the essential spectrum covers the whole interval $[\epsilon_{\mathrm{D},v},\infty)$ but we will not need it here.

\subsection{Making the ditch deeper}\label{ss:deep}

Let us return to our problem in the situation with parallel asymptotes, $\beta=0$. A natural way to achieve strong coupling for a regular potential channel consists of modifying its depth without changing the profile of its bottom. For the sake of brevity we denote by $\chi_a$ the characteristic function of the set $\Omega^a\subset\R^2$, with an abuse of notation we will use the same symbol the characteristic function of the interval $I_a\subset\R$ and for the measure $\chi_a(x)\D x$ on $\R$; then we consider the behavior of the operator $H_{\Gamma,V+\lambda\chi_a}$.
\begin{theorem}
\label{theo:deepditch}
Let assumptions \eqref{assa}, \eqref{assb} and \eqref{assc} be satisfied, then we have
\begin{equation}
\label{strong_ess}
\inf\sigma_\mathrm{ess}(H_{\Gamma,V+\lambda\chi_a}) = \epsilon_{\mu_\perp + \lambda\chi_a,\rho} = -\lambda + \epsilon_{\mathrm{D},v} + \OO\big(\e^{-c\sqrt{\lambda}}\,\big)
\end{equation}
as $\lambda\to\infty$ for some $c>0$, and $\sigma_\mathrm{disc}(H_{\Gamma,V+\lambda\chi_a})\ne\emptyset$ for all $\lambda$ large enough.
\end{theorem}
\begin{proof}
Consider the operator family $\{H_{\Gamma,V+\lambda\chi_a}+\lambda I:\: \lambda\ge 0\}$. The corresponding family of quadratic forms is monotonously increasing, then it follows from \cite[Thm.~S.14]{RS80} that these operators converge to $H_{\mathrm{D},V}$ as $\lambda\to\infty$ in the generalized strong resolvent sense \cite[Sec.~9.3]{W00}. By the same argument, $h_{\Gamma,v+\lambda\chi_a}+\lambda I \to h_{\mathrm{D},v}$ as $\lambda\to\infty$ in the generalized strong resolvent sense. We will use the last result to prove \eqref{strong_ess}. Its first relation follows form Proposition~\ref{prop:spectess}. To get the second one, we note that by the standard double-well estimate \cite[Thm.~1.5]{Si84} we have $\epsilon_{\mu_\perp + \lambda\chi_a,\rho} = \epsilon_{\mu_\perp + \lambda\chi_a} + \OO\big(\e^{-c\sqrt{\lambda}}\,\big)$ where the constant $c$ is given by the associated Agmon metric.

The second claim follows from Theorem~\ref{theo:ExtensionDirichletGuides}. As the curvature $\gamma$ is by assumption nonzero and compactly supported, the assumptions are satisfied and the limiting operator $H_{\mathrm{D},V}$ has at least one eigenvalue below $\epsilon_{\mathrm{D},v}$, hence $H_{\Gamma,V+\lambda\chi_a}+\lambda I$ has an eigenvalue below $\epsilon_{\mu_\perp + \lambda\chi_a,\rho}$, and consequently, below $-\lambda + \epsilon_{\mathrm{D},v}$ for $\lambda$ large enough.
\end{proof}

\subsection{Scaling with increasing volume}\label{ss:scaling}

Another strong coupling situation, extending the result of Proposition~\ref{thm:leaky} to regular potentials, arises if we combined linear scaling with a sufficiently fast increasing mean strength, more specifically, if we consider the following family of operators,
\begin{equation}
\label{scaledop}
H_{\Gamma,V_{g(\lambda)}} \quad\text{with}\quad v_{g(\lambda)}(x):= g(\lambda) v(\lambda x),
\end{equation}
with $g$ being a suitable function such that $\lim_{\lambda\to\infty} \frac{g(\lambda)}{\lambda}=\infty$ as $\lambda\to\infty$.
 \begin{proposition} \label{thm:mix}
Adopt assumptions \eqref{assa}--\eqref{assc} and suppose, in addition, that $\Gamma\in C^4(\R)$. Then there is a function $g_0$ with the indicated properties such that $\sigma_\mathrm{disc}(H_{\Gamma,V_{g(\lambda)}})$ is nonempty for any $g\ge g_0$ and all $\lambda$ large enough.
 \end{proposition}
\begin{proof}
Instead of \eqref{scaledop}, consider the two-parameter family of operators $H_{\Gamma,V_{\xi,\lambda}}$ with the profile potential $v_{\xi,\lambda}(x):= \xi\lambda v(\lambda x)$. As we have already mentioned, for a fixed $\xi>0$ these operators converge in the norm-resolvent sense to $H_{\Gamma,\alpha_\xi}$ with $\alpha_\xi:=\xi\|v\|_1$ as $\lambda\to\infty$, cf.~\cite{EI01, BEHL17}. The difficulty to deal with is that the essential spectrum threshold depend on $\xi$. Given a below bounded self-adjoint operator $A$, we denote by $\mu_k(A)$ the corresponding numbers obtained from the minimax principle\footnote{This is a traditional notation and we are sure there is no danger of confusion with the symbol used to describe measure-type potentials.} \cite[Sec.XIII.1]{RS78} and by $\mu_\infty(A)$ the limiting value of this sequence (for operators with a finite discrete spectrum we consider here, the sequence is constant from some index on). Then we introduce the operators
$$
\tilde{H}_{\Gamma,V_{\xi,\lambda}} := H_{\Gamma,V_{\xi,\lambda}} - \mu_\infty(H_{\Gamma,V_{\xi,\lambda}}) \quad\text{and}\quad \tilde{H}_{\Gamma,\alpha_\xi} := H_{\Gamma,\alpha_\xi} - \mu_\infty(H_{\Gamma,\alpha_\xi})
$$
whose essential spectrum threshold is zero by construction; the mentioned result implies that $\tilde{H}_{\Gamma,V_{\xi,\lambda}}$ converges for a fixed $g$ to $\tilde{H}_{\Gamma,\alpha_\xi}$ as $\lambda\to\infty$. At the same time, $\mu_k(\tilde{H}_{\Gamma,\alpha_\xi}) = \epsilon_k(\alpha_\xi)+ \frac14\alpha_\xi^2$ converges by \eqref{leakyasympt} to $\epsilon_k$, the $k$th eigenvalue of $S_\Gamma$. Next we use a simple telescopic estimate,
\begin{equation}
\label{telescopic}
\big| \mu_k(\tilde{H}_{\Gamma,V_{\xi,\lambda}}) - \mu_k(S_\Gamma)\big| \le \big| \mu_k(\tilde{H}_{\Gamma,V_{\xi,\lambda}}) - \mu_k(\tilde{H}_{\Gamma,\alpha_\xi})\big| + \big| \mu_k(\tilde{H}_{\Gamma,\alpha_\xi}) - \mu_k(S_\Gamma)\big|.
\end{equation}
Given an arbitrary $\varepsilon>0$, one can find a $\xi_0>0$ such that the second term on right-hand side of \eqref{telescopic} is smaller that $\frac12\varepsilon$ for all $\xi>\xi_0$, and to such a $\xi$ there is a $\lambda_\xi>0$ with the property that the first term on right-hand side of \eqref{telescopic} is smaller than $\frac12\varepsilon$ for $\lambda>\lambda_\xi$. The map $\xi\mapsto\lambda_\xi$ is obviously monotonous which allows us to define $f_0$ as its pull-back and $g_0(\lambda):= \lambda f(\lambda)$. For any function $g\ge g_0$ we then have $\mu_k(\tilde{H}_{\Gamma,V_{g(\lambda)})}) \to \mu_k(S_\Gamma)$ as $\lambda\to\infty$, and since $\mu_1(S_\Gamma) = \mu_1(S_\Gamma) - \mu_\infty(S_\Gamma)<0$ holds for our non-straight $\Gamma$, the same must be true for $\mu_k(\tilde{H}_{\Gamma,V_{g(\lambda)})})$ if $\lambda$ is large enough, in other words, we conclude $\sigma_\mathrm{disc}(H_{\Gamma,V_{g(\lambda)}})$ must be nonempty for large $\lambda$.
\end{proof}

Note that one could get a better idea of the function $g_0$ looking into the error terms of the tow limits involved, but we will not follow this route here.

\section{A numerical example}\label{s:num}
\setcounter{equation}{0}

While we have been able to establish the weak/strong coupling dichotomy, finer properties are out of reach for an analytical treatment, for instance, it is not easy to find the critical strength needed to produce the discrete spectrum. They can be dealt with numerically as we are going to illustrate here. As an example, we consider the U-shaped channel with a polynomial profile, a multiple of
\begin{equation}
\label{eq:NumericsTransProfile}
    v_{\alpha}(x) \defeq \min\Big\{
        \Big(\frac{|x|-\rho}{a}\Big)^{\alpha} - 1,\: 0 \Big\}.
\end{equation}
where $\Omega \defeq \left(-\rho - a, -\rho+a \right) \bigcup \left(\rho - a, \rho+ a\right) $ and $\alpha\geq 2$ is an even integer. Such a profile obviously tends to a rectangular well as $\alpha\to\infty$ but we work with finite values as it makes the numerics easier.

Naturally, we have to solve the equation in a finite region. We do it for both the Dirichlet and Neumann boundaries which allows us to control the precision using the gap between the two. In computations, we made use of the Spectral Method \cite[Chap.~9]{T00}. Looking for the critical coupling constant of the potential $\lambda v_{\alpha}$, we plot in Figure \ref{fig:CriticalLambda}
\begin{figure}[h]
    \centering
    \includegraphics[width=0.75\textwidth]{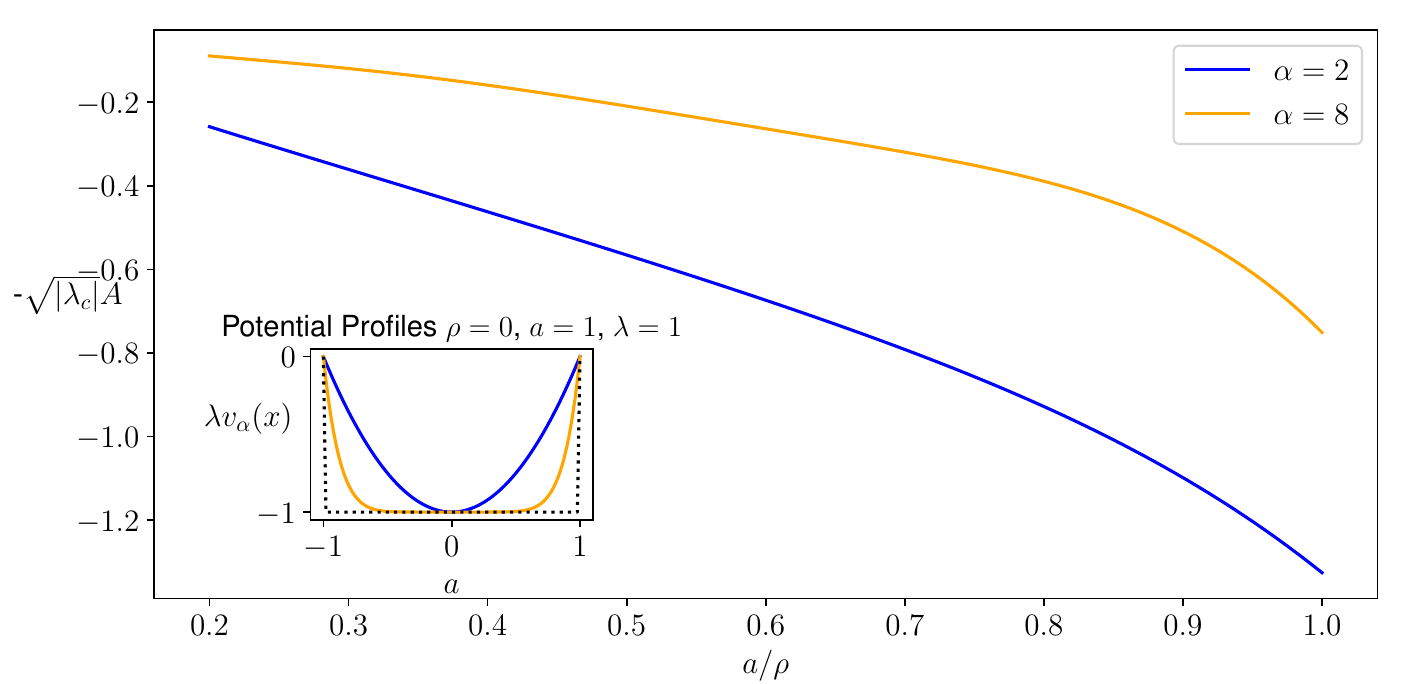}
    \caption{The critical interaction strength \emph{vs.} the relative channel width}
    \label{fig:CriticalLambda}
\end{figure}
the dependence of two dimensionless quantities, $-\sqrt{|\lambda|}A$, where $A:= \frac{1}{\pi} \int_{-a}^a \sqrt{v_{\alpha}(x)}\,\D x$, and $a/\rho$, the relative width of the channel, for two values of $\alpha$. The former is, of course, an upper estimate, asymptotically exact, of the number of bound states minus one in the one-dimensional potential well of that profile. Predictably, the curves are monotonously decreasing since the tunnelling between the parallel channels plays a more prominent role as $a$ increases and a stronger coupling is needed to compete with it. Also the dependence on $\alpha$ makes sense, as the dimensionless `volume' increases as $\alpha$ grows. We also see that for larger $\alpha$ the curve becomes steeper as we approach the point $a=\rho$ as the `residual' barrier is thinner then.

For illustration we also plot in Figure~\ref{BoundState}
\begin{figure}
    \centering
    \includegraphics[width=0.9\textwidth]{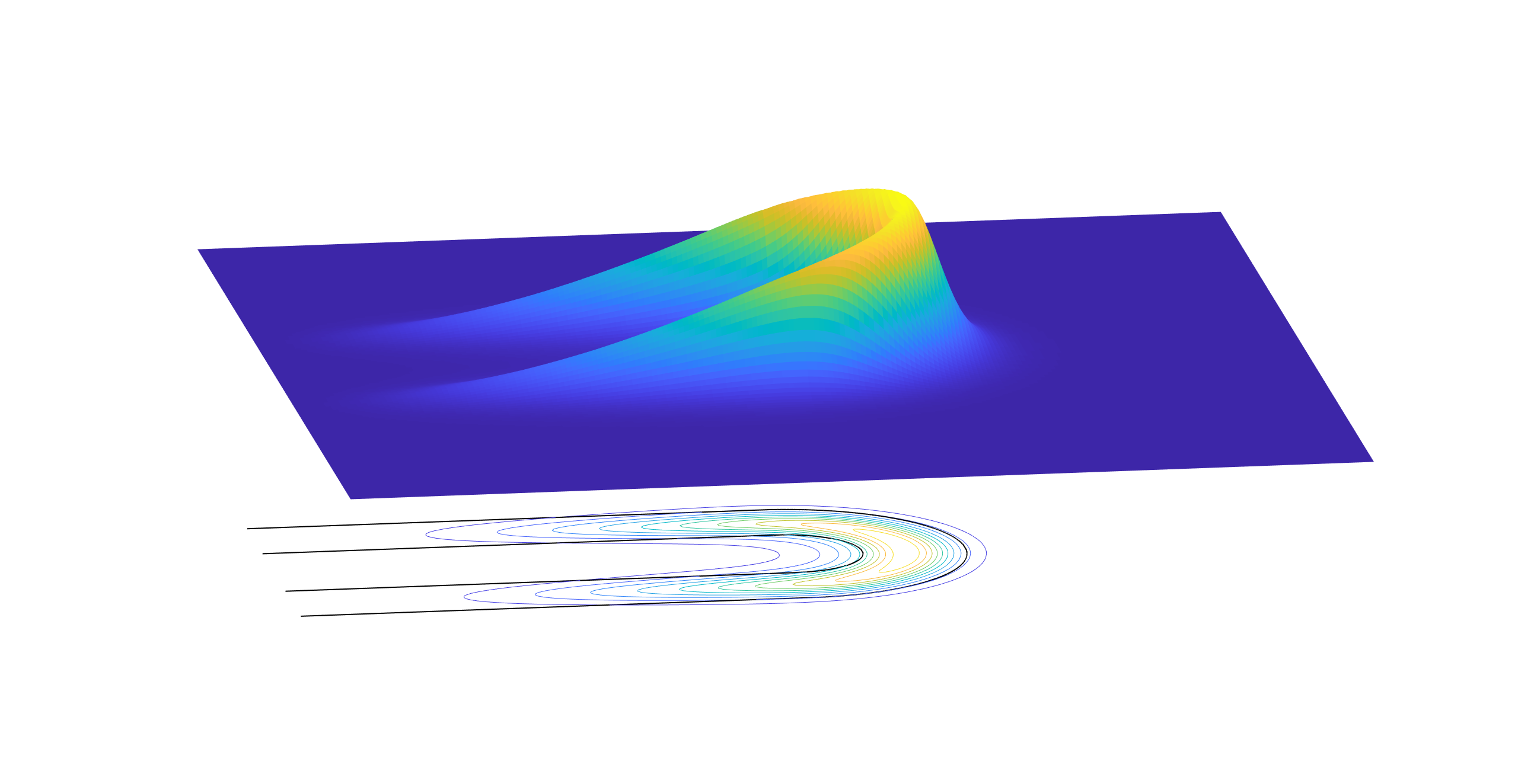}
    \caption{Bound state eigenfunction $\rho=0.25$, $a=0.1$, $\lambda=-225$ and $\alpha=2$.}
    \label{BoundState}
\end{figure}
the eigenfunction in the situation when the coupling is stronger than critical; the thin black line in the picture indicates the potential support.

\section{Concluding remarks} 
\setcounter{equation}{0}

While the shape of the curved part in our result is quite general, modulo the regularity requirement, the guide is supposed to be straight outside a compact. A question arising naturally is what happens if $\Gamma$ approached parallel lines only asymptotically; one expects that the general picture would remain similar as here. Other type of asymptotic behaviors, on the other hand, may lead to different spectral properties. One can ask, for instance, whether there can be geometrically induced eigenfunctions supported away from the `bend' of $\Gamma$; one expects this to happen, say, when the double guide is locally bent or the gap between two channels is locally diminished at an appropriate place.

\subsection*{Data availability statement}

Data are available in the article.

\subsection*{Conflict of interest}

The authors have no conflict of interest.

\subsection*{Acknowledgements}

The work was supported by the Czech Science Foundation within the project 21-07129S by the European Union's Horizon 2020 research and innovation programme under the Marie Sk{\l}odowska-Curie grant agreement No 873071.
Computational resources were provided by the e-INFRA CZ project (ID:90254), supported by the Ministry of Education, Youth and Sports of the Czech Republic.

\section*{References}

\end{document}